\newtheorem{theorem}{Theorem}
\newtheorem{corollary}[theorem]{Corollary}
\newcommand{\Aa}{\mathcal{A}}
\newcommand{\xx}{\mathcal{X}}
\newcommand{\yy}{\mathcal{Y}}
\newcommand{\Gg}{\mathcal{G}}
\newcommand{\ff}{\mathcal{F}}
\begin{document}

\title{Average value of solutions of the bipartite quadratic assignment problem and linkages to domination analysis}

\author{\sc{Ante \'Custi\'c}\thanks{{\tt acustic@sfu.ca}.
Department of Mathematics, Simon Fraser University Surrey,
 250-13450 102nd AV, Surrey, British Columbia, V3T 0A3, Canada}
\and
\sc{Abraham P. Punnen}\thanks{{\tt apunnen@sfu.ca}. Department of Mathematics, Simon Fraser University Surrey,
 250-13450 102nd AV, Surrey, British Columbia, V3T 0A3, Canada}}

\maketitle

\begin{abstract}
In this paper we study the complexity and domination analysis in the context of the \emph{bipartite quadratic assignment problem}. Two variants of the problem, denoted by BQAP1 and BQAP2, are investigated. A formula for calculating the average objective function value $\Aa$ of all solutions is presented whereas computing the median objective function value is shown to be NP-hard. We show that any heuristic algorithm that produces a solution with objective function value at most $\Aa$ has the domination ratio at least $\frac{1}{mn}$. Analogous results for the standard \emph{quadratic assignment problem} is an open question. 
We show that computing a solution whose objective function value is no worse than that of $n^mm^n-{\lceil\frac{n}{\alpha}\rceil}^{\lceil\frac{m}{\alpha}\rceil}{\lceil\frac{m}{\alpha}\rceil}^{\lceil\frac{n}{\alpha}\rceil}$ solutions of BQAP1 or $m^mn^n-{\lceil\frac{m}{\alpha}\rceil}^{\lceil\frac{m}{\alpha}\rceil}{\lceil\frac{n}{\alpha}\rceil}^{\lceil\frac{n}{\alpha}\rceil}$ solutions of BQAP2, is NP-hard for any fixed natural numbers $a$ and $b$ such that $\alpha=\frac{a}{b}>1$. However, a solution with the domination number  $\Omega(m^{n-1}n^{m-1}+m^{n+1}n+mn^{m+1})$ for BQAP1 and $\Omega(m^{m-1}n^{n-1}+m^2n^{n}+m^mn^2)$ for BQAP2, can be found in $O(m^3n^3)$ time. \medskip

\noindent\emph{Keywords:} Quadratic assignment, bilinear programs, domination analysis, heuristics.
\end{abstract}

\section{Introduction}

For a given $m\times n\times m\times n$ array $Q=(q_{ijk\ell})$ and $m\times n$ matrices $c=(c_{ij})$ and $d=(d_{ij})$, the {\it bipartite quadratic assignment problem of type 1} (BQAP1) is to
\begin{align}\nonumber
	\text{Minimize} \qquad &\sum_{i=1}^m\sum_{j=1}^n\sum_{k=1}^m\sum_{\ell=1}^n q_{ijk\ell}x_{ij}y_{k\ell} + \sum_{i=1}^m\sum_{j=1}^n c_{ij}x_{ij} + \sum_{i=1}^m\sum_{j=1}^n d_{ij}y_{ij} \\
	\text{subject to}\quad \  \ & \sum_{j=1}^n x_{ij}=1 \qquad \qquad i=1,2,\ldots,m,\label{x1}\\
	&\sum_{i=1}^m y_{ij}=1 \qquad \qquad j=1,2,\ldots,n, \label{y1}\\
	&x_{ij},\ y_{ij}\in \{0,1\} \qquad  i=1,\ldots,m,\ \ j=1,\ldots,n. \nonumber
\end{align}
Similarly, for a given $m\times m\times n\times n$ array $Q=(q_{ijk\ell})$, and $m\times m$ matrix $c=(c_{ij})$ and $n\times n$ matrix $d=(d_{ij})$, the {\it bipartite quadratic assignment problem of type 2} (BQAP2) is to
\begin{align}
	\text{Minimize} \qquad &\sum_{i=1}^m\sum_{j=1}^m\sum_{k=1}^n\sum_{\ell=1}^n q_{ijk\ell}x_{ij}y_{k\ell} + \sum_{i=1}^m\sum_{j=1}^m c_{ij}x_{ij} + \sum_{i=1}^n\sum_{j=1}^n d_{ij}y_{ij}  \nonumber\\
	\text{subject to}\quad \  \ & \sum_{j=1}^m x_{ij}=1 \qquad \qquad i=1,2,\ldots,m, \label{x2}\\
	&\sum_{i=1}^n y_{ij}=1 \qquad \qquad j=1,2,\ldots,n, \label{y2}\\
	&x_{ij},\ y_{k\ell}\in \{0,1\} \qquad  i,j=1,\ldots,m,\ \ k,\ell=1,\ldots,n. \nonumber
\end{align}
When $m=n$, the problems BQAP1 and BQAP2 are the same. Furthermore, if we impose the additional restriction that $x_{ij}=y_{ij}$ for all $i,j$, both BQAP1 and BQAP2 becomes equivalent to the well-known \emph{quadratic assignment problem} (QAP). Note that the constraints $x_{ij}=y_{ij}$ can be enforced simply by modifying the entries of $Q,c$ and $d$ without explicitly stating the constraints.

The problems BQAP1 and BQAP2 were studied by Punnen and Wang in \cite{PW15} where they proposed efficient heuristic algorithms to solve these problems. They also reported extensive experimental results establishing the quality of their heuristic solutions. If the constraints $x_{ij}, y_{ij}\in \{0,1\} $ are replaced by $0 \leq x_{ij}, y_{ij} \leq 1$ for all $i,j$, in BQAP1 and BQAP2,  we get their corresponding bilinear programming (BLP)~\cite{alt,kono,kky} relaxations, denoted by BLP1 and BLP2, respectively. It is well known that there exists an optimal solution to BLP which is an extreme point of the underlying convex polytope~\cite{alt,kono,kky}. In the case of BLP1 and BLP2, the coefficient matrix of the constraints is totally unimodular and hence all extreme points are of 0-1 type. Thus, BLP1 and BLP2 are respectively equivalent to BQAP1 and BQAP2. Therefore, BQAP1 and BQAP2 can also be solved using any general purpose algorithms for BLP.

BQAP1 and BQAP2 are known to be strongly NP-hard~\cite{PW15}. To the best of our knowledge, theoretical properties of these problems are not investigated thoroughly in the literature. In this paper, we study the complexity of BQAP1 and BQAP2 from the point of view of domination analysis~\cite{a1,gp}. 
Many researchers considered such analysis for various combinatorial optimization problems~\cite{a1,an1,gp,gr1,gg1r,gutin3,gutin2,gg2r,h1,k1,kh1,p3,p2,PSK,rb1,sn1,sd2,sn3,t1,v1,z1}. Domination analysis is also linked to exponential neighborhoods~\cite{x1} and very large-scale neighborhood search~\cite{a14,MP09}.

In this paper, we provide a closed form formula to calculate the average value of all solutions of BQAP1 and BQAP2 and show that there are at least $n^{m-1}m^{n-1}$ and $m^{m-1}n^{n-1}$ solutions respectively for BQAP1 and BQAP2 that have objective function value equal to or worse than the average value of all solutions. For the standard quadratic assignment problem, although a closed form formula exists to calculate the average value of solutions, establishing non-trivial domination results is an open problem~\cite{an1,gutin2,sn3}. We then show that some heuristics that works well in practice could produce solutions with objective function value  worse than the average value of solutions and also provide simple polynomial algorithms that guarantee a solution with objective function value no worse than the average value of solutions. Unlike the average value, computing the median value of solutions for BQAP1 and BQAP2 are shown to be NP-hard. Further, we show that computing a solution whose objective function value is no worse than that of $n^mm^n-{\lceil\frac{n}{\alpha}\rceil}^{\lceil\frac{m}{\alpha}\rceil}{\lceil\frac{m}{\alpha}\rceil}^{\lceil\frac{n}{\alpha}\rceil}$ solutions of BQAP1 is NP-hard for any fixed natural numbers $a$ and $b$ such that $\alpha=\frac{a}{b}>1$. Likewise,  computing a solution whose objective function value is no worse than that of $m^mn^n-{\lceil\frac{m}{\alpha}\rceil}^{\lceil\frac{m}{\alpha}\rceil}{\lceil\frac{n}{\alpha}\rceil}^{\lceil\frac{n}{\alpha}\rceil}$ solutions of BQAP2 is also shown to be NP-hard for any fixed natural numbers $a$ and $b$ such that $\alpha=\frac{a}{b}>1$.

Let $\xx_1$ denotes the set of all 0-1 $m\times n$  matrices satisfying \eqref{x1}, and $\xx_2$ denotes the set of 0-1 $m\times m$ 0-1 matrices satisfying \eqref{x2}. Similarly, let $\yy_1$ be the set of  all $m \times n$ 0-1 matrices satisfying \eqref{y1} and $\yy_2$ be the set of all $n \times n$ 0-1 matrices satisfying \eqref{y2}. Also, $\ff_1$ and $\ff_2$ denote the sets of feasible solutions of BQAP1 and BQAP2, respectively. Note that $|\ff_1|=n^mm^n$ and $|\ff_2|=m^mn^n$. Let $M=\{1,2,\ldots,m\}$ and $N=\{1,2,\ldots,n\}$. For given cost arrays and $x\in\xx_1$, $y\in\yy_1$, let $f_1(x,y)$ denotes the objective function of BQAP1 and for $x\in\xx_2$, $y\in\yy_2$, let $f_2(x,y)$ denotes the objective function of BQAP2.

\section{Average value of solutions and domination properties}

In this section we present extensions of various results proved in \cite{PSK} in the context of the unconstrained bipartite binary quadratic programs to the problems BQAP1 and BQAP2.

Given appropriate size cost arrays $Q,c,d$, let $\Aa_1(Q,c,d)$ and $\Aa_2(Q,c,d)$ be the average objective function value of all feasible solutions of BQAP1 and BQAP2, respectively.

\begin{theorem}\label{th1}
 ${\displaystyle 	\Aa_1(Q,c,d)=\frac{1}{mn}\sum_{i=1}^m\sum_{j=1}^n\sum_{k=1}^m\sum_{\ell=1}^n q_{ijk\ell}+ \frac{1}{n}\sum_{i=1}^m\sum_{j=1}^nc_{ij}+ \frac{1}{m}\sum_{i=1}^m\sum_{j=1}^nd_{ij}}$, and\\
 ${\displaystyle \Aa_2(Q,c,d)=\frac{1}{mn}\sum_{i=1}^m\sum_{j=1}^m\sum_{k=1}^n\sum_{\ell=1}^n q_{ijk\ell}+ \frac{1}{m}\sum_{i=1}^m\sum_{j=1}^mc_{ij}+ \frac{1}{n}\sum_{i=1}^n\sum_{j=1}^nd_{ij}}$.
\end{theorem}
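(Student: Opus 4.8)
The plan is to compute the average by linearity of expectation over a uniformly random feasible solution. For BQAP1, a uniformly random $(x,y)\in\ff_1$ corresponds to choosing, independently, for each row $i\in M$ a column $\sigma(i)\in N$ uniformly at random (this fixes $x$), and for each column $j\in N$ a row $\tau(j)\in M$ uniformly at random (this fixes $y$). Since $|\ff_1|=n^m m^n$ and all these choices are independent and uniform, averaging $f_1(x,y)$ over $\ff_1$ is the same as taking the expectation $\mathbb{E}[f_1(x,y)]$ under this product distribution. So I would split $f_1$ into its three parts and handle each term separately.

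For the linear terms this is immediate: $\mathbb{E}[x_{ij}] = \Pr[\sigma(i)=j] = \tfrac1n$ for every $i,j$, so $\mathbb{E}\big[\sum_{i,j} c_{ij} x_{ij}\big] = \tfrac1n \sum_{i,j} c_{ij}$; symmetrically $\mathbb{E}[y_{ij}] = \tfrac1m$ gives $\mathbb{E}\big[\sum_{i,j} d_{ij} y_{ij}\big] = \tfrac1m \sum_{i,j} d_{ij}$. For the quadratic term, the key observation is that $x_{ij}$ (a function of $\sigma$) and $y_{k\ell}$ (a function of $\tau$) are always independent — whether or not $(i,j)=(k,\ell)$ — because $x$ and $y$ are chosen from separate, independent blocks of random choices. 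Hence $\mathbb{E}[x_{ij} y_{k\ell}] = \mathbb{E}[x_{ij}]\,\mathbb{E}[y_{k\ell}] = \tfrac1n\cdot\tfrac1m$ for all $i,j,k,\ell$, which yields $\mathbb{E}\big[\sum_{i,j,k,\ell} q_{ijk\ell} x_{ij} y_{k\ell}\big] = \tfrac1{mn}\sum_{i,j,k,\ell} q_{ijk\ell}$. Summing the three contributions gives exactly the stated formula for $\Aa_1(Q,c,d)$. The argument for $\Aa_2(Q,c,d)$ is identical after adjusting the index ranges: now $x\in\xx_2$ is picked by choosing for each $i\in M$ a value in $M$ uniformly (so $\mathbb{E}[x_{ij}]=\tfrac1m$), and $y\in\yy_2$ by choosing for each $j\in N$ a value in $N$ uniformly (so $\mathbb{E}[y_{ij}]=\tfrac1n$), and again $x$ and $y$ are independent, giving $\mathbb{E}[x_{ij}y_{k\ell}]=\tfrac1{mn}$; this produces the second formula.

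If one prefers a purely counting argument avoiding probabilistic language, the same proof works by direct enumeration: for the quadratic term, $\sum_{(x,y)\in\ff_1} q_{ijk\ell} x_{ij} y_{k\ell} = q_{ijk\ell}\cdot |\{x\in\xx_1 : x_{ij}=1\}|\cdot|\{y\in\yy_1 : y_{k\ell}=1\}| = q_{ijk\ell}\cdot n^{m-1}\cdot m^{n-1}$, since fixing one row of $x$ leaves $n^{m-1}$ choices and fixing one column of $y$ leaves $m^{n-1}$ choices; dividing by $|\ff_1| = n^m m^n$ gives the factor $\tfrac1{mn}$, and the linear terms are handled the same way. There is no real obstacle here — the only point that needs a line of care is the independence (or the product form of the count) of the $x$-part and the $y$-part in the bilinear term, which is exactly what makes $\mathbb{E}[x_{ij}y_{k\ell}]$ factor even on the "diagonal" $(i,j)=(k,\ell)$; this is the feature that distinguishes BQAP from the ordinary QAP, where the constraint $x_{ij}=y_{ij}$ couples the two and the average formula involves different coefficients.
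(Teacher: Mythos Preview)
Your proof is correct and essentially matches the paper: the paper's main argument is the direct counting version (each $q_{ijk\ell}$ appears in exactly $n^{m-1}m^{n-1}$ terms of the sum over $\ff_1$, etc.), and immediately afterwards the paper remarks that the same formula follows from linearity of expectation with $\Pr[x_{ij}=1]=1/n$, $\Pr[y_{ij}=1]=1/m$ --- exactly your primary argument. You have simply presented the two equivalent formulations in the opposite order.
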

\begin{proof}
Since for every $(x,y)\in\ff_1$ we have that $x_{ij},y_{ij}\in\{0,1\}$ for all $i,j$, it follows that the objective function value $f_1(x,y)$ is a sum of multiple cost elements $c_{ij}$, $d_{ij}$ and $q_{ijkl}$, where each cost element appears at most once. Cost element $c_{ij}$ appears in the objective value sum of $(x,y)$ if and only if $x_{ij}=1$, and there is exactly $n^{m-1}m^n$ such solutions $(x,y)$ in $\ff_1$ for every fixed $i\in M$ and $j\in N$. Similarly, cost element $d_{ij}$ appears in the objective value sum of $(x,y)$ if and only if $y_{ij}=1$, and there is exactly $n^{m}m^{n-1}$ such solutions in $\ff_1$. Lastly, cost element $q_{ijkl}$ appears in the objective value sum of $(x,y)$ if and only if $x_{ij}=1$ and $y_{kl}=1$, and there is exactly $n^{m-1}m^{n-1}$ such solutions $(x,y)$ in $\ff_1$. Hence if we sum objective function values for all feasible solutions in $\ff_1$ we get
\[
\sum_{(x,y)\in\ff_1}f_1(x,y)=n^{m-1}m^{n-1}\sum_{\substack{(i,j,k,\ell)\in \\ M\times N\times M \times N}} q_{ijk\ell}+ n^{m-1}m^{n}\sum_{\substack{(i,j)\in\\ M\times N}}c_{ij}+ n^{m}m^{n-1}\sum_{\substack{(i,j)\in\\ M\times N}}d_{ij}.
\]
Since $|\ff_1|=n^mm^n$ the average objective function value is equal to
\[
\Aa_1(Q,c,d)=\frac{\sum_{(x,y)\in\ff_1}f_1(x,y)}{|\ff_1|}=\frac{1}{mn}\sum_{\substack{(i,j,k,\ell)\in \\ M\times N\times M \times N}} q_{ijk\ell}+ \frac{1}{n}\sum_{\substack{(i,j)\in\\ M\times N}}c_{ij}+ \frac{1}{m}\sum_{\substack{(i,j)\in\\ M\times N}}d_{ij}.
\]
The expression for $\Aa_2(Q,c,d)$ can be obtained in a similar way.
\end{proof}

The formula  for $\Aa_1(Q,c,d)$ and $\Aa_2(Q,c,d)$ discussed in Theorem~\ref{th1} can also be deduced using a probabilistic argument. Consider $x_{ij}$ and $y_{ij}$ as a 0-1 random variables with probability of $x_{ij}=1$ as $1/m$ and probability of $y_{ij}=1$ as $1/n$. Then the formula for $\Aa_1(Q,c,d)$ and $\Aa_2(Q,c,d)$ follows from the linearity property of expectations.

We now provide a lower bound on the size of the set of feasible solutions that are no better than the average. This result is interesting since we do not assume any specific probability distribution on the objective function values of the solutions. Moreover, such results for the standard quadratic assignment problem is an open question~\cite{an1,gutin2,sn3}.  Let $\Gg_i=\{(x,y)\colon x\in\xx_i,y\in\yy_i, f_i(x,y)\geq A_i(Q,c,d)\}$, $i=1,2$.

\begin{theorem}\label{thm:dom}
	$|G_1|\geq n^{m-1}m^{n-1}$ and $|G_2|\geq m^{m-1}n^{n-1}.$
\end{theorem}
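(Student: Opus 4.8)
The plan is to exhibit, for each problem, a large family of feasible solutions that partitions into groups whose members average out to exactly $\Aa_i(Q,c,d)$; then in each group at least one member must be $\geq\Aa_i(Q,c,d)$, and counting these gives the bound. The natural group to use is a ``cyclic shift'' orbit. Concretely, for BQAP1 fix $y\in\yy_1$ arbitrarily, and for each $x\in\xx_1$ consider the $m$ solutions obtained by cyclically permuting the column-choices of $x$ simultaneously across all rows: if $x$ selects column $\sigma(i)$ in row $i$, then for $t=0,1,\dots,n-1$ let $x^{(t)}$ select column $\sigma(i)+t \pmod n$. This is an orbit of size $n$ inside $\xx_1$. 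Summing $f_1(x^{(t)},y)$ over $t$, each linear term $c_{ij}$ and each quadratic term $q_{ijk\ell}$ is hit exactly once as $t$ ranges over the orbit (for fixed row $i$, the selected column runs through all of $N$), while the $d$-terms are unaffected; dividing by $n$ shows the orbit-average of the $x$-dependent part equals its average over all of $\xx_1$ with $y$ held fixed. One then does the same trick on $y$ with a cyclic shift of size $m$ over the rows, so that altogether, averaging over an orbit of the combined cyclic group of size $mn$ recovers $\Aa_1(Q,c,d)$ exactly. Hence every such orbit contains a solution in $\Gg_1$. The orbits of this $\mathbb{Z}_n\times\mathbb{Z}_m$ action on $\ff_1$ are disjoint and cover $\ff_1$, and since $|\ff_1|=n^m m^n$ while each orbit has size at most $mn$, the number of orbits is at least $n^m m^n/(mn)=n^{m-1}m^{n-1}$, giving $|\Gg_1|\geq n^{m-1}m^{n-1}$. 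The argument for BQAP2 is identical with $m$ and $n$ playing the roles dictated by \eqref{x2}--\eqref{y2}, yielding $|\Gg_2|\geq m^{m-1}n^{n-1}$.

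An alternative, cleaner route avoids worrying about orbit sizes: argue directly by averaging. Partition $\ff_1$ into the $n^m m^n/(mn)$ cosets of the cyclic subgroup generated by the two shifts above (one needs that this subgroup acts \emph{freely}, i.e. every orbit really has size exactly $mn$; the shift action on column/row selections is free because shifting every row's choice by $t\neq 0 \pmod n$ changes the selection). On each coset the average objective value is $\Aa_1(Q,c,d)$ by the computation above, so each coset contributes at least one member with $f_1\geq \Aa_1(Q,c,d)$; these members are distinct across cosets, and there are $n^{m-1}m^{n-1}$ cosets.

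The main obstacle is verifying that the cyclic shift action is genuinely free on $\ff_1$ and that the orbit-average of the quadratic terms is exactly the global average — that is, that for fixed $y$ and fixed $x$, as $(s,t)$ ranges over $\mathbb{Z}_m\times\mathbb{Z}_n$ the pair (column chosen in row $i$ of $x^{(t)}$, row chosen in column $\ell$ of $y^{(s)}$) hits each $(j,k)\in N\times M$ with the correct uniform multiplicity. This is where one must be careful: the shift changes all rows of $x$ together, so within a single orbit the column selected in row $i$ takes each value in $N$ exactly once, but the \emph{joint} distribution over rows is not independent. Fortunately the objective is linear in the individual terms $x_{ij}y_{k\ell}$, so it suffices that each fixed term $q_{ijk\ell}$ is counted the same number of times over the orbit, which follows term-by-term from the one-dimensional fact that a cyclic shift of $\mathbb{Z}_n$ visits every residue once. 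Spelling this out term-by-term, exactly as in the proof of Theorem~\ref{th1} but with the sum restricted to one orbit, completes the argument.
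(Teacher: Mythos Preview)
Your proposal is correct and follows essentially the same approach as the paper. The paper defines an equivalence relation on $\ff_1$ via simultaneous cyclic column-shifts of $x$ and cyclic row-shifts of $y$ (exactly your $\mathbb{Z}_n\times\mathbb{Z}_m$ action), observes that each class has size $mn$, verifies that the objective values over each class sum to $mn\,\Aa_1(Q,c,d)$ by the same term-by-term count you describe, and concludes that each of the $n^{m-1}m^{n-1}$ classes contributes a member to $\Gg_1$; your write-up recasts this in group-action language and is a bit more explicit about checking freeness of the action, but the argument is the same.
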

\begin{proof}
	We will prove the result for BQAP1. Consider the equivalence relation $\sim$ on $\ff_1$, where $(x,y)\sim(x',y')$ if and only if there exist $a,b\in\{0,1,\ldots,n-1\}$ such that $x_{ij}=x'_{i(j+a \mod n)}$ and $y_{ij}=y'_{(i+b \mod m)j}$ for all $i,j$. Note that $\sim$ partitions $\ff_1$ into equivalence classes of size $mn$. Hence there is $n^{m-1}m^{n-1}$ such classes. Let $S$ be any such equivalence class. Note that for every $i\in M,j\in N$, there are exactly $m$ solutions in $S$ for which $x_{ij}=1$, and there are exactly $n$ solutions in $S$ for which $y_{ij}=1$. Furthermore, for every $i,k\in M,j,\ell\in N$, there is exactly one solution in $S$ for which $x_{ij}y_{k\ell}=1$. Hence
\[
	\sum_{(x,y)\in S}f_1(x,y)=\sum_{\substack{(i,j,k,\ell)\in \\ M\times N\times M \times N}} q_{ijk\ell}+m\sum_{\substack{(i,j)\in\\ M\times N}}c_{ij}+ n\sum_{\substack{(i,j)\in\\ M\times N}}d_{ij}=mn\Aa (Q,c,d).
\]
Since $|S|=mn$, there exist at least one $(x,y)\in S$ such that $f_1(x,y)\geq \Aa(Q,c,d)$, which proves the theorem for BQAP1. The proof for BQAP2 follows in a similar way and is omitted.
\end{proof}

To show that the bound presented in Theorem~\ref{thm:dom} is tight, consider the following instance. Let arrays $Q,c,d$ be such that all of their elements are 0, except $q_{i'j'k'\ell'}=1$ for some fixed $i',j',k',\ell'$. Since exactly one element from every equivalence class defined by $\sim$ is not better than average, the tightness follows.

The proof technique of Theorem~\ref{thm:dom} can also be used to obtain a feasible solution with the objective function value less than or equal to the average. In the proof of Theorem~\ref{thm:dom} we show that in every equivalence class defined by $\sim$ there is a feasible solution with the objective function value greater than or equal to the average $\Aa_1(Q,c,d)$. By the same reasoning we know that in every such class there is a feasible solution with objective function value less than or equal to the average. For example, given $a\in N\ (M)$, $b\in M\ (N)$ let $(x^a,y^b)\in\ff_1\ (\ff_2)$ be defined  as 
\[
x_{ij}^a=
\begin{cases}
	1 & \text{if } j=a, \\
	0 & \text{otherwise},
\end{cases}\quad \text{and}\quad
y_{ij}^b=
\begin{cases}
	1 & \text{if } i=b, \\
	0 & \text{otherwise}.
\end{cases}
\]
Then $(x^{a_1},y^{b_1})\sim(x^{a_2},y^{b_2})$ for every $a_1,a_2\in N\ (M)$ and $b_1,b_2\in M\ (N)$, and  $f_1(x^a,y^b)=\sum_{i\in M,j\in N} q_{iabj}+\sum_{i\in M}c_{ia}+\sum_{j\in N}d_{bj}\ (=f_2(x^a,y^b))$. 

\begin{corollary}\label{cor:ab_av}
	For every $(Q,c,d)\in \ff_1$ and $(Q',c',d')\in\ff_2$ we have that
	\[\min_{a\in N,b\in M}\{f_1(x^a,y^b)\}\leq\Aa_1(Q,c,d)\leq\max_{a\in N,b\in M}\{f_1(x^a,y^b)\},\] 
\[\min_{a\in M,b\in N}\{f_2(x^a,y^b)\}\leq\Aa_2(Q',c',d')\leq\max_{a\in M,b\in N}\{f_2(x^a,y^b)\}.\]
\end{corollary}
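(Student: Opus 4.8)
The plan is to derive Corollary~\ref{cor:ab_av} as an immediate consequence of the averaging identity already established in the discussion preceding it, rather than re-deriving anything from scratch. The key observation, stated just above the corollary, is that all the solutions $(x^a, y^b)$ with $a \in N$, $b \in M$ lie in a single equivalence class $S$ under the relation $\sim$ used in the proof of Theorem~\ref{thm:dom}: indeed, cyclically shifting the columns of $x^a$ sends $x^a$ to $x^{a'}$, and cyclically shifting the rows of $y^b$ sends $y^b$ to $y^{b'}$. Since $|N| \cdot |M| = nm = |S|$, the set $\{(x^a,y^b) : a \in N, b \in M\}$ is exactly this equivalence class.

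First I would invoke the computation from the proof of Theorem~\ref{thm:dom}, which shows that for any equivalence class $S$ of $\sim$ we have $\sum_{(x,y) \in S} f_1(x,y) = mn\,\Aa_1(Q,c,d)$; applied to the particular class $S = \{(x^a,y^b)\}$ this gives
\[
\frac{1}{mn} \sum_{a \in N} \sum_{b \in M} f_1(x^a, y^b) = \Aa_1(Q,c,d).
\]
Then I would note the trivial fact that the arithmetic mean of a finite collection of real numbers lies between its minimum and maximum, which immediately yields
\[
\min_{a \in N, b \in M} f_1(x^a,y^b) \;\leq\; \Aa_1(Q,c,d) \;\leq\; \max_{a \in N, b \in M} f_1(x^a,y^b).
\]
As a sanity check one could also verify directly, using the explicit formula $f_1(x^a,y^b) = \sum_{i \in M, j \in N} q_{iabj} + \sum_{i \in M} c_{ia} + \sum_{j \in N} d_{bj}$ displayed before the corollary, that summing over all $a \in N$ and $b \in M$ reproduces the right-hand side of the formula for $\Aa_1(Q,c,d)$ in Theorem~\ref{th1} (each $q_{ijk\ell}$ is counted once, each $c_{ij}$ is counted $m$ times since $b$ ranges over $M$, and each $d_{ij}$ is counted $n$ times since $a$ ranges over $N$), but appealing to Theorem~\ref{thm:dom}'s identity is cleaner. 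The argument for BQAP2 is identical with $M$ and $N$ interchanged in the appropriate places, using the corresponding equivalence relation on $\ff_2$ and the formula $f_2(x^a,y^b) = f_1(x^a,y^b)$ noted above.

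There is no real obstacle here; the only point requiring a moment's care is confirming that the $nm$ solutions $(x^a,y^b)$ are pairwise distinct and together form a full equivalence class of $\sim$ (so that the size count $mn$ matches and the class-sum identity applies verbatim) — this is precisely the claim $(x^{a_1},y^{b_1}) \sim (x^{a_2},y^{b_2})$ recorded in the text, combined with the fact that distinct pairs $(a,b)$ give distinct matrices. One should also be mildly careful that the corollary's hypothesis "$(Q,c,d) \in \ff_1$" is a slight abuse of notation — it should be read as "for every cost array triple $(Q,c,d)$ of the appropriate sizes" — but this does not affect the proof.
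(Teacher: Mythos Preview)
Your proof is correct and follows precisely the approach the paper intends: the corollary is stated without proof because it is an immediate consequence of the preceding discussion, which records that the solutions $(x^a,y^b)$ constitute a single $\sim$-equivalence class and appeals to the class-sum identity from the proof of Theorem~\ref{thm:dom}. Your observation about the notational slip ``$(Q,c,d)\in\ff_1$'' is also apt.
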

Corollary~\ref{cor:ab_av} enables us to find a feasible solution for BQAP1 (BQAP2) with objective function value no worse than $\Aa_1(Q,c,d)$ ($\Aa_2(Q,c,d)$) in $O(m^2n^2)$ time. Note that any equivalence class defined by $\sim$ can be used to obtain inequalities as in the corollary above.

Let $(x^\Gamma,y^\Gamma)\in\ff$ be a solution produced by a heuristic $\Gamma$, where $\ff$ denotes the set of feasible solutions. Let $\Gg^\Gamma=\{(x,y)\in \ff\colon f(x,y)\geq f(x^\Gamma,y^\Gamma)\}$, and $\mathcal{I}$ be the collection of all instances of the problem with instance size parameters $m$ and $n$. Then
\[
	\inf_{I\in\mathcal{I}}\left|\Gg^\Gamma\right| \text{ \ and \ }  \inf_{I\in\mathcal{I}}\frac{|\Gg^\Gamma|}{|\ff|},
\]
are called \emph{domination number} and \emph{domination ratio} of $\Gamma$, respectively~\cite{a1,gp}.

\begin{theorem}
	For any fixed natural numbers $a$ and $b$ such that $\alpha=\frac{a}{b}>1$ no polynomial time algorithm for BQAP1 \textup{(}BQAP2\textup{)} can have domination number more than $n^mm^n-{\lceil\frac{n}{\alpha}\rceil}^{\lceil\frac{m}{\alpha}\rceil}{\lceil\frac{m}{\alpha}\rceil}^{\lceil\frac{n}{\alpha}\rceil}$ \textup{(}$m^mn^n-{\lceil\frac{m}{\alpha}\rceil}^{\lceil\frac{m}{\alpha}\rceil}{\lceil\frac{n}{\alpha}\rceil}^{\lceil\frac{n}{\alpha}\rceil}$\textup{)}, unless P=NP.
\end{theorem}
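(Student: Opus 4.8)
I would reduce from the decision version of BQAP1, which is strongly NP-hard; in fact it suffices to reduce over the sub-family $m=n$, where BQAP1 coincides with BQAP2 and contains QAP, so the NP-hardness source may be taken to be a square QAP instance. Fix $\alpha=\frac{a}{b}>1$ and set $m'=\lceil m/\alpha\rceil$, $n'=\lceil n/\alpha\rceil$, $\tilde m=m-m'$, $\tilde n=n-n'$ and $K=(n')^{m'}(m')^{n'}$; for all large $m,n$ we have $\tilde m,\tilde n\ge 1$ and $\tilde m,\tilde n\to\infty$, so BQAP1 on $\tilde m\times\tilde n$ instances remains a legitimate hard starting point. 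Given such an instance $I'$, with integer cost arrays (WLOG), and a threshold $t$, I would build a BQAP1 instance $I$ of size $m\times n$ as follows: declare the first $\tilde m$ rows ``core'' and the remaining $m'$ rows ``free'', and likewise split the columns into $\tilde n$ core and $n'$ free ones; put the cost arrays of $I'$ on the ``all-core'' entries of $Q,c,d$; set every entry of $Q$ that touches a free index to $0$; set to $0$ the entries of $c$ and $d$ pairing a free row with a free column; and set to a large (polynomially bounded) penalty $P$ every remaining entry of $c$ and $d$, namely those pairing a core row with a free column or a free row with a core column. Taking $P$ larger than twice the sum of absolute values of all entries of $I'$ will suffice.

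Call a feasible solution of $I$ \emph{clean} if it uses no copy of $P$, i.e.\ if every core row picks a core column and every free row a free column, and dually for $y$. The core coordinates of a clean solution form a feasible solution of $I'$, its \emph{core part}, and its objective value in $I$ equals the value of that core part as a solution of $I'$ (every free-index contribution vanishes). Two facts are then immediate: (i) by the choice of $P$, every non-clean solution has strictly larger objective value than every clean one; and (ii) for each fixed feasible solution of $I'$, the number of clean solutions of $I$ with that core part is exactly $(n')^{m'}(m')^{n'}=K$ --- the $m'$ free rows choose independently among the $n'$ free columns, and the $n'$ free columns independently among the $m'$ free rows. Hence the minimum objective value of $I$ equals $\mathrm{OPT}(I')$, it is attained exactly by the clean solutions whose core part is optimal for $I'$, and there are at least $K$ such solutions.

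From (i) and (ii) it follows that the feasible solutions of $I$ that are beaten (strictly) by fewer than $K$ other solutions are precisely the clean solutions with an $I'$-optimal core part: any non-clean solution, and any clean solution whose core part has value $>\mathrm{OPT}(I')$ (using integrality), is beaten by all $\ge K$ clean optimal solutions. Therefore, if a polynomial-time algorithm $\Gamma$ had domination number larger than $n^mm^n-K$, then on input $I$ it would return a solution beaten by at most $K-1$ others, hence a clean solution whose core part is an \emph{optimal} solution of $I'$. Reading off this core part and evaluating the objective of $I'$ on it gives $\mathrm{OPT}(I')$ in polynomial time, and comparing with $t$ decides $I'$; thus $\mathrm{P}=\mathrm{NP}$. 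The argument for BQAP2 is identical (and already follows from the case $m=n$).

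The crux is the bookkeeping behind the previous two paragraphs: one must check that the free blocks contribute exactly the factor $K=\lceil n/\alpha\rceil^{\lceil m/\alpha\rceil}\lceil m/\alpha\rceil^{\lceil n/\alpha\rceil}$, that the penalty genuinely separates all non-clean from all clean solutions, and --- the decisive point --- that no clean solution with a strictly suboptimal core part can be beaten by fewer than $K$ others. This last fact is exactly what upgrades the conclusion from ``$\Gamma$ returns \emph{some} feasible solution of $I'$'' (which would be useless) to ``$\Gamma$ returns an \emph{optimal} one'', and it is why the threshold in the statement is $n^mm^n-K$ and nothing smaller. The remaining loose ends are routine: instances $I'$ in which every feasible solution is optimal (then $\mathrm{OPT}(I')$ is available directly), and ensuring $\tilde m,\tilde n\ge 1$, both handled by restricting to large $m,n$, which is harmless for NP-hardness.
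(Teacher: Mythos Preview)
Your proposal is correct and rests on the same idea as the paper: pad a hard instance with ``free'' rows and columns, using large penalties on the mixed entries, so that the blown-up instance has many optimal solutions; an algorithm with the stated domination number then must return one of them, from which the optimum of the original instance can be read off.

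The one genuine difference is the \emph{direction} of the embedding. You fix the target size $(m,n)$, carve out a free block of exactly the size $(\lceil m/\alpha\rceil,\lceil n/\alpha\rceil)$ so that the number of free extensions equals $K$ on the nose, and take the NP-hard source instance to live on the residual core of size $(\tilde m,\tilde n)=(m-\lceil m/\alpha\rceil,\,n-\lceil n/\alpha\rceil)$; you then need the (easy) observation that every core size is realizable and that BQAP1 remains hard on those sizes. The paper instead starts from an arbitrary source instance of size $(m,n)$ and \emph{scales up} to size $(abm,abn)$; the factor $ab$ is chosen so that $\lceil abm/\alpha\rceil=b^2m$ and $\lceil abn/\alpha\rceil=b^2n$ exactly, and one then checks the inequality $(ab-1)\ge b^2$ to conclude that the free block, of size $((ab-1)m,(ab-1)n)$, contributes at least $K$ optimal solutions. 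Your route avoids that inequality and matches $K$ exactly, at the price of the extra surjectivity remark about $(\tilde m,\tilde n)$; the paper's route avoids the surjectivity issue, at the price of the blow-up and the arithmetic comparison. Either way the key mechanism is identical.
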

\begin{proof}
	We prove the statement for BQAP1, and in the case of BQAP2, the proof follows analogously.

	Let natural numbers $a$ and $b$ be such that $\alpha=\frac{a}{b}>1$. We show that a polynomial algorithm $\Omega$ for  BQAP1 with domination number at least $n^mm^n-{\lceil\frac{n}{\alpha}\rceil}^{\lceil\frac{m}{\alpha}\rceil}{\lceil\frac{m}{\alpha}\rceil}^{\lceil\frac{n}{\alpha}\rceil}+1$ can be used to compute an optimal solution of  BQAP1. Consider an arbitrary instance $(Q,c,d)$ of BQAP1, and let $Q^*=(q^*_{ij})$ be an $abm\times abn\times abm\times abn$ array such that
\[
q^*_{ijk\ell}=\begin{cases}
                        q_{ijk\ell} & \text{if } i,k\in M \text{ and } j,\ell\in N,\\
                        0 & \text{otherwise.}
                    \end{cases}
\]
Furthermore, let $L$ be a large number and let $c^*=(c^*_{ij})$ and $d^*=(d^*_{ij})$ be $abm\times abn$ matrices such that
\[
c^*_{ij}=\begin{cases}
                        c_{ij} & \text{if } i\in M \text{ and } j\in N,\\
				0 & \text{if } i\notin M \text{ and } j\notin N,\\
                        L & \text{otherwise}
                    \end{cases} \quad \text{  and  } \quad
d^*_{ij}=\begin{cases}
                        d_{ij} & \text{if } i\in M \text{ and } j\in N,\\
                        0 & \text{if } i\notin M \text{ and } j\notin N,\\
                        L & \text{otherwise.}
                    \end{cases}
\]
The BQAP1 instances $(Q^*,c^*,d^*)$ and $(Q,c,d)$ are equivalent. In particular, from any optimal solution for $(Q^*,c^*,d^*)$ an optimal solution for $(Q,c,d)$ can be recovered. Hence, the number of optimal solutions of $(Q^*,c^*,d^*)$ is at least $(abn-n)^{abm-m}(abm-m)^{abn-n}$. Therefore, the number of non-optimal solutions is at most $(abn)^{abm}(abm)^{abn}-(abn-n)^{abm-m}(abm-m)^{abn-n}$. Let $(x',y')$ denotes the output of $\Omega$, and assume that $(x',y')$ is no worse than $(abn)^{abm}(abm)^{abn}-{\lceil\frac{abn}{\alpha}\rceil}^{\lceil\frac{abm}{\alpha}\rceil}{\lceil\frac{abm}{\alpha}\rceil}^{\lceil\frac{abn}{\alpha}\rceil}+1=(abn)^{abm}(abm)^{abn}-(b^2n)^{b^2m}(b^2m)^{b^2n}+1$ solutions. From $a>b$, it follows that $(abn-n)^{abm-m}(abm-m)^{abn-n}$ is greater than $(b^2n)^{b^2m}(b^2m)^{b^2n}$, therefore $(x',y')$ is an optimal solution. Using $(x',y')$ we can find an optimal solution for $(Q,c,d)$, which is in contradiction to the fact that BQAP1 is NP-hard.
\end{proof}

\begin{theorem}
	Computing a median of the objective function values of both BQAP1 and BQAP2 is NP-hard.
\end{theorem}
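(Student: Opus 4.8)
Here is the line of attack I would take for the last statement.

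\medskip
\noindent\emph{Proof plan.}
The plan is to reduce from BQAP1 itself. Since $\max\{f_1(x,y)\}=-\min\{-f_1(x,y)\}$ and BQAP1 is strongly \textup{NP}-hard, it is \textup{NP}-hard to compute $\max\{f_1(x,y)\colon x\in\xx_1,\ y\in\yy_1\}$; I will show that from an arbitrary instance $I_0=(Q_0,c_0,d_0)$ of BQAP1, of size $m_0\times n_0$, one can build in polynomial time an instance $I'$ of BQAP1 for which a median of the $|\ff_1|$ objective values equals $\max(I_0)$. (I take ``median'' of a multiset of $N$ numbers to be the $\lceil N/2\rceil$-th smallest; a remark will note the adjustment for conventions that average the two central values.) The case of BQAP2 is then obtained by exchanging the roles of the two dimensions.

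The construction: first pad $I_0$ so that $n_0$ is even and $n_0\ge 4$ — this does not destroy \textup{NP}-hardness, since both dimensions still grow. Then form $I$ of size $(m_0+1)\times n_0$ from $I_0$ by adjoining one extra ``selector'' row on the $x$-side whose variables are inert in every cost term, and by placing large negative weights on the corresponding extra row on the $y$-side, so that this row is never chosen in a maximizing solution; with all remaining entries copied from $I_0$, the instance $I$ contains $I_0$ faithfully, in the sense that $\max\{f_1^{I}(x,y)\colon x_{1j}=0\text{ for }j=1,\dots,n_0/2\}=\max(I_0)$, and this maximum is attained by many solutions because the selector row is inert. Finally fix a sufficiently large constant $C$ (exceeding, say, twice $\sum|q^{*}_{ijk\ell}|+\sum|c^{*}_{ij}|+\sum|d^{*}_{ij}|$) and obtain $I'$ from $I$ by adding $C$ to the selector-row coefficients $c^{*}_{1j}$ for $j=1,\dots,n_0/2$.

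Since $n_0$ is even, exactly $|\ff_1|/2$ feasible solutions of $I'$ have the selector row pick a column in $\{1,\dots,n_0/2\}$; each such solution has objective value $f_1^{I}(x,y)+C$, hence strictly larger than the value of every one of the other $|\ff_1|/2$ solutions (whose value is left unchanged). Thus in sorted order the unshifted values occupy positions $1,\dots,|\ff_1|/2$, and the $\lceil|\ff_1|/2\rceil$-th smallest value — the median — is the largest unshifted value, namely $\max(I_0)$. A polynomial-time algorithm for the median would therefore compute $\max(I_0)$, a contradiction.

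The step I expect to be the real obstacle is the cardinality bookkeeping, forced on us by the rigidity of the feasible region: $\ff_1$ always has exactly $n^m m^n$ points and the fixed ``product of two assignment polytopes'' shape, so one cannot simply remove unwanted solutions or append tailor-made ones, and the whole argument must be carried out by re-weighting a single instance of precisely chosen dimensions. Getting the shifted and unshifted families to have \emph{exactly} equal size — here engineered through the parity of $n_0$ — so that the median lands exactly at their interface is the crux; for a convention that averages the two central order statistics one must, in addition, make those two statistics coincide, which can be arranged by exploiting that the padded row and surplus $y$-columns are inert and so the relevant optimum of $I'$ is attained with large multiplicity. The remaining checks (that the extracted restricted maximum is still \textup{NP}-hard, which holds since both dimensions of $I$ grow with $I_0$, and that $C$ and the penalty weights stay polynomially bounded) are routine.
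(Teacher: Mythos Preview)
Your argument is essentially correct (for the $\lceil N/2\rceil$-th–smallest convention), but the paper takes a quite different and much shorter route. The paper reduces from \textsc{Partition}: given $a_1,\dots,a_n$ with $n$ even, it sets $m=n$ (so the instance is simultaneously a BQAP1 and a BQAP2 instance), takes $Q=0$, $d=0$, and $c_{ij}=a_i$ for $j\le n/2$, $c_{ij}=0$ otherwise. Feasible solutions then pair up under the column shift $j\mapsto j+n/2\pmod n$ on $x$, each pair having objective values summing to $\sum_i a_i$; hence the multiset of objective values is symmetric about $\tfrac12\sum_i a_i$, and the median equals $\tfrac12\sum_i a_i$ precisely when that value is actually attained, i.e.\ precisely when the \textsc{Partition} instance is a yes–instance.

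What you do differently is a self-reduction: you bootstrap from the NP-hardness of $\max$-BQAP1, adjoin a single inert selector coordinate, and push exactly half of the solution space above the other half by a large additive constant, forcing the median to land at $\max(I_0)$. This is valid, and the selector device is a reusable trick, but it costs noticeably more machinery than the paper's construction: parity padding, a penalty row on the $y$-side to keep the embedded maximum faithful, a carefully sized $C$, and a separate argument for BQAP2 (where enlarging $m$ forces $x$ to become $(m_0{+}1)\times(m_0{+}1)$, so the original rows acquire an extra column choice that must also be penalised). The paper's \textsc{Partition} construction sidesteps all of this—no padding, no penalty weights, and one instance covering both models via $m=n$. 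One small gap to flag in your write-up: the remark on the averaging-median convention does not go through as stated, because with the two blocks of \emph{exactly} equal size the $(\lvert\ff\rvert/2{+}1)$-th order statistic sits in the shifted block regardless of the multiplicity of the unshifted maximum; the easy repair is to shift one fewer selector column so the unshifted block is strictly larger than half, and then use the selector's inertness to get both central order statistics equal to $\max(I_0)$.
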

\begin{proof}
Consider the NP-complete PARTITION problem: Given $n$ positive integers $a_1,a_2,\ldots,a_n$, determine if there exists a partition of $N=\{1,2,\ldots,n\}$ into $S_1$ and $S_2$ such that $\sum_{i\in S_1}a_i=\sum_{i\in S_2}a_i$. From an instance of PARTITION we will build an instance $I$ of both BQAP1 and BQAP2 (i.e.\@ $n=m$), such that the median of objective values is $\sum_{i\in N}a_i/2$ if and only if starting PARTITION instance $I$ is a 'yes' instance.

Without loss of generality we can assume than $n$ is even. Let $n\times n\times n\times n$ array $Q$ and $n\times n$ matrices $c$ and $d$ constitute an instance of BQAP1 (BQAP2), where $Q$ and $d$ are null arrays, and $c$ is given by
\[
c_{ij}=\begin{cases}
	a_i & \text{if } j\leq \frac{n}{2},\\
	0 & \text{otherwise.}
\end{cases}
\]
Note that we can partition the set of feasible solutions $\ff$ into pairs of solutions $(x,y)$, $(x',y')$ by the rule $x_{ij}=x'_{i(j+\frac{n}{2}\mod n)}$ and $y_{ij}=y'_{ij}$, for all $i,j\in N$. Note that for every such pair of solutions either $f(x,y)\leq \sum_{i\in N}a_i/2\leq f(x',y')$ or $f(x',y')\leq \sum_{i\in N}a_i/2\leq f(x,y)$. Hence if there is a solution $(x,y)$ for which $f(x,y)=\sum_{i\in N}a_i/2$, then $\sum_{i\in N}a_i/2$ is a median objective value. Every value $\alpha$ appears as an objective value of some solutions for $(Q,c,d)$ if and only if there exists $S\subseteq N$ such that $\sum_{i\in S}a_i=\alpha$. Hence the theorem follows from NP-completeness of the PARTITION problem.
\end{proof}

\section{Domination analysis of heuristics}

Let us now analyze some heuristics for BQAP1 and BQAP2 from the domination analysis~\cite{a1,gp,h1,t1,z1} point of view. First, we consider two local search heuristics that performed very well in experimental analysis~\cite{PW15}.

\subsection{Swap based neighborhoods}

Let $(x,y)$ be a feasible solution of the BQAP1 (BQAP2). Then a \emph{swap on $x$} operation, denoted by $swapx(i,j)$ and applied on $(x,y)$, produces a feasible solution $(x',y)$ such that
\[
x'_{st}=\begin{cases}
	x_{st} & \text{for } s\neq i, \\
	1 & \text{for } s=i,\ t=j,\\
	0 & \text{otherwise.}
\end{cases}
\]
Analogously, swap on $y$ ($swapy(i,j)$) operation on $(x,y)$ produces a feasible solution $(x,y')$ such that
\[
y'_{st}=\begin{cases}
	y_{st} & \text{for } t\neq j, \\
	1 & \text{for } s=i,\ t=j,\\
	0 & \text{otherwise.}
\end{cases}
\]
Then we define the \emph{swap} neighborhood to be the collection  of all solutions obtained by applying one $swapx(i,j)$ or one $swapy(i,j)$. Cardinality of the swap neighborhood for BQAP1 is $2mn-m-n+1$ and for BQAP2 is $m^2+n^2-m-n+1$.

Note that two swaps on $x$, say $swapx(i,j)$ and $swapx(k,l)$ for $i\neq k$, are independent in the sense that they can be applied concurrently or one after another yielding the same solution. Hence, applying a set of swaps $swapx(i_1, j_1),\ldots,swapx(i_k,j_k)$, where $i_s\neq i_t$ for $s\neq t$, we call a \emph{concurrent swap on $x$}. The set of all concurrent swaps on $x$ we denote by $cswap(x)$. Analogously, concurrent swaps on $y$, denoted by $cswap(y)$, are defined. Then we define the \emph{concurrent swap} neighborhood to be the collection of solutions obtained by applying one element of  $cswap(x)\cup cswap(y)$.
Note that the size of the concurrent swap neighborhood is $n^m+m^n-1$ for BQAP1 and $m^m+n^n-1$ for  BQAP2. Nevertheless, it can be searched in polynomial time, since the best concurrent swap on $x$ is made of swaps $swapx(1,j_1),\ldots,swapx(m,j_m)$, where $swapx(k,j_k)$ is the best improving swap among $swapx(k,1),\ldots,swapx(k,n)$ ($swapx(k,m)$ in the case of BQAP2). The best concurrent swap on $y$ can be found in the same manner. Note that for a fixed $x$ we can find a solution $(x,y')$ with the smallest objective value in polynomial time, by searching the concurrent swaps on $y$ for some feasible solution $(x,y)$. Analogously, for a fixed $y$, we can find in polynomial time the smallest objective value solution $(x',y)$.

Lastly we will define the \emph{optimized swap} neighborhood. The \emph{optimized swap on $x$} $oswapx(i,j)$ applied on a feasible solution $(x,y)$ consists of applying $swapx(i,j)$ on $(x,y)$ to obtain $(x',y)$, and then replacing $y$ with $y'$ so that the objective value of $(x',y')$ is minimized. Optimized swap on $y$ ($oswapy(i,j)$) is defined analogously. Then the optimized swap neighborhood is defined to be the collection of solutions obtained by applying one $oswapx(i,j)$ or one $oswapy(i,j)$. As noted above, given $x$ we can find in polynomial time such $y$ that minimizes the objective value of $(x,y)$. Hence the optimized swap neighborhood can be searched in polynomial time. Furthermore, note that the size of the neighborhood, and hence also the lower bound for the domination number of the local optimum based on the optimized swap neighborhood, is $\Theta(nm^{n+1}+mn^{m+1})$ and $\Theta(m^2n^n+n^2m^m)$ for BQAP1 and BQAP2, respectively.

Experimental analysis of the local search approaches for BQAP1 based on the three neighborhoods described above is presented in \cite{PW15}. Our goal here is to analyze the local search optimal solution quality in terms of domination and compared to the average solution value. Local search based on the concurrent neighborhood starting from a feasible solution $(x^0,y^0)$ operates as follows. We fix $x^0$ (or maybe $y^0$) and find $y^1$ that minimizes $(x^0,y^1)$. Then $y^1$ is fixed and we look for $x^1$ that minimizes $(x^1,y^1)$. Then again $x^1$ is fixed, and so on. This process is repeated until a local optimum is obtained. Such heuristic approach falls into the category of \emph{alternating algorithms}. Quality of a solution of such alternating algorithm for the \emph{bipartite boolean quadratic programming problem} (BBQP) compared to the average solution value was investigated in \cite{PSK}.

\begin{theorem}
	The objective function value of a locally optimal solution for BQAP1 and BQAP2 based on the swap neighborhood and the concurrent swap neighborhood could be arbitrary bad and could be worse than $\Aa_1(Q,d,c)$ and $\Aa_2(Q,d,c)$, respectively.
\end{theorem}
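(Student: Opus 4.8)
The plan is to construct, for each neighborhood, a family of instances on which a local optimum exists whose objective value exceeds the average value $\Aa_1(Q,c,d)$ (resp. $\Aa_2$) by an arbitrarily large amount. Since the swap neighborhood is contained in the concurrent swap neighborhood, it suffices to exhibit a single instance that is a local optimum for the \emph{concurrent} swap neighborhood (this is the stronger claim, and it immediately implies the swap-neighborhood claim once we check the starting solution is also locally optimal there, or simply build two separate families). I would focus on BQAP1; the BQAP2 case is entirely analogous and I would state it is omitted.

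First I would take $c$ and $d$ to be null matrices and put all the cost into $Q$, so that $f_1(x,y)=\sum_{i,j,k,\ell}q_{ijk\ell}x_{ij}y_{k\ell}$ and, by Theorem~\ref{th1}, $\Aa_1(Q,c,d)=\frac{1}{mn}\sum_{i,j,k,\ell}q_{ijk\ell}$. Fix the ``diagonal'' feasible solution $(\bar x,\bar y)$ with $\bar x_{i1}=1$ for all $i$ and $\bar y_{1\ell}=1$ for all $\ell$ (all other entries zero); this is the solution $(x^1,y^1)$ in the notation preceding Corollary~\ref{cor:ab_av}. Now I would choose the entries of $Q$ so that (a) the value $f_1(\bar x,\bar y)=\sum_{i\in M,\ell\in N}q_{i1 1\ell}$ equals some large number $L$, (b) every entry $q_{ijk\ell}$ that could be ``switched in'' by a single $swapx$ or $swapy$ move, or by a concurrent swap, is non-negative and large enough that no such move decreases the objective (e.g. set all $q_{i1 1\ell}=L$, all other entries that appear in some neighbor's objective also equal to $L$ or larger, and make the remaining ``off'' entries huge so the average is dominated by them but they never get selected from $(\bar x,\bar y)$ or its neighbors). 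The key mechanism is the mismatch exploited in the tightness discussion after Theorem~\ref{thm:dom}: for the solution $(x^a,y^b)$ the contributing $q$-entries are exactly the $mn$ entries $q_{iabj}$, a set of measure $\frac{1}{mn}$ of all entries, so one has complete freedom to make $f_1$ on a \emph{single} neighborhood-closed solution as large as desired while keeping $\Aa_1$ equally large — but we need $\Aa_1$ \emph{strictly smaller} than $L$, which I arrange by loading still larger mass on entries never selected along any move from $(\bar x,\bar y)$.

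More carefully, I would separate the index set of $Q$ into three parts: the $mn$ entries realized by $(\bar x,\bar y)$ itself; the additional entries realized by some single-swap or concurrent-swap neighbor of $(\bar x,\bar y)$; and everything else. Set the first group to $L$, the second group to a value $\ge L$ (so every neighbor has value $\ge f_1(\bar x,\bar y)=L$, making $(\bar x,\bar y)$ a local optimum for all three neighborhoods simultaneously), and choose the third group negative, say all equal to some $-K$ with $K$ large. Then $f_1(\bar x,\bar y)=L$ while $\Aa_1=\frac{1}{mn}\big(\text{(bounded positive terms)}-K\cdot(\text{number of third-group entries})\big)$, which tends to $-\infty$ as $K\to\infty$. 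Hence $f_1(\bar x,\bar y)-\Aa_1$ is unbounded, proving both that the local optimum can be arbitrarily bad and that it can be worse than the average. To make the verification in the second paragraph rigorous I must enumerate which $q$-entries appear in the objective of the neighbors of $(\bar x,\bar y)$: a $swapx(i,j)$ neighbor of $(\bar x,\bar y)$ has $x$-support $\{(i,j)\}\cup\{(i',1):i'\ne i\}$ and $y$-support $\{(1,\ell):\ell\in N\}$, so it realizes the entries $q_{i'1 1\ell}$ (for $i'\ne i$) together with $q_{ij1\ell}$; concurrent swaps on $x$ similarly realize entries of the form $q_{i\sigma(i)1\ell}$ for a map $\sigma$; and symmetrically for swaps on $y$. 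Listing these sets and checking they are all in the first two groups is the only place real bookkeeping is needed.

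The main obstacle I anticipate is precisely this bookkeeping: ensuring the chosen cost assignment makes $(\bar x,\bar y)$ a local optimum for the concurrent swap neighborhood, whose neighbors are exponentially many but structured (indexed by coordinate-to-column maps). The cleanest route is to observe that a concurrent swap on $x$ moves each row of $x$ independently, so its objective decomposes as $\sum_{i\in M}\sum_{\ell\in N}q_{i\,\sigma(i)\,1\,\ell}$; setting $q_{i j 1\ell}\ge L/(mn)$... more simply, setting \emph{all} entries $q_{ij1\ell}$ (any $i,j\in$ their ranges, $\ell\in N$) and all entries $q_{i'1k\ell}$ equal to $L$, and all other entries equal to $-K$, makes every concurrent-swap neighbor of $(\bar x,\bar y)$ have objective exactly $L$ (a sum of $mn$ copies of $L$ divided appropriately — one checks the support always has size $mn$ and lies in the ``$L$'' region), so $(\bar x,\bar y)$ is a (weak) local optimum; a tiny perturbation breaks ties to make it strict if desired. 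With $K$ free, $\Aa_1\to-\infty$ and we are done; the BQAP2 construction uses the analogous diagonal solution and the same three-way split of the index set of $Q$.
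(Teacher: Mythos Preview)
Your construction is correct: setting $q_{ijk\ell}=L$ whenever $j=1$ or $k=1$ and $q_{ijk\ell}=-K$ otherwise (with $c,d$ null) does make $(\bar x,\bar y)$ a weak local optimum for the concurrent swap neighborhood---every concurrent swap on $x$ keeps $k=1$ in all active terms, every concurrent swap on $y$ keeps $j=1$, so all neighbors have value $mnL$---while $\Aa_1=(m+n-1)L-(m-1)(n-1)K\to-\infty$ and the optimum is at most $-mnK$. (Minor quibble: you write $f_1(\bar x,\bar y)=L$ in one place and later set each entry to $L$, which actually gives $f_1(\bar x,\bar y)=mnL$; this does not affect the argument.) Your observation that swap $\subseteq$ concurrent swap, hence one example suffices, is also correct.

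The paper's route is considerably more economical. It uses essentially a \emph{single} nonzero quadratic entry together with two tiny linear ``lures'': set everything to $0$ except $q_{1111}=-L$, $c_{12}=d_{21}=-\epsilon$, and take any feasible $(x^0,y^0)$ with $x^0_{12}=y^0_{21}=1$. Because picking up the $-L$ term requires \emph{both} $x_{11}=1$ and $y_{11}=1$, and the concurrent swap neighborhood only moves one side at a time, no neighbor can improve; meanwhile $f_1(x^0,y^0)=-2\epsilon$, the optimum is $-L$, and $\Aa_1\approx -L/(mn)$. So the paper exploits the same structural fact you identified---that one-sided swaps cannot coordinate $x$ and $y$---but packages it as a two-variable trap rather than by blanketing the whole one-sided-reachable region with constant cost. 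Your approach buys an explicit description of which $q$-entries are touched by neighbors (useful intuition), at the price of more bookkeeping; the paper's buys a three-line proof.
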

\begin{proof}
	Let $L$ be a large  and $\epsilon >0$ a small number, and consider cost arrays $Q,c,d$ of the BQAP1 (BQAP2) such that all of its elements are equal to $0$, except that $c_{12}$ and $d_{21}$ are equal $-\epsilon$, and $q_{1111}=-L$. Let a feasible solution $(x^0,y^0)$ be such that $x^0_{12}=y^0_{21}=1$. Then $(x^0,y^0)$ is a local optimum of both swap and concurrent swap neighborhoods. We have that the objective function value of $(x^0,y^0)$ is $-2\epsilon$, while the optimal solution value is $-L$.
\end{proof}

Although we cannot argue that a swap based local search will give us a solution no worse than the average, we can use it to improve the domination number of an already good solution. 

\begin{theorem}
	A feasible solution $(x,y)$  with the domination number  $\Omega(m^{n-1}n^{m-1}+m^{n+1}n+mn^{m+1})$ for BQAP1 and $\Omega(m^{m-1}n^{n-1}+m^2n^{n}+m^mn^2)$ for BQAP2, can be found in $O(m^3n^3)$ time.
\end{theorem}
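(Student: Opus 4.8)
The plan is to take the near-average solution furnished by Corollary~\ref{cor:ab_av} and improve its domination number with a local-search phase in the optimized swap neighborhood. I describe BQAP1; the argument for BQAP2 is symmetric, with $m$, $n$ and the two factor sets $\xx_2,\yy_2$ playing interchanged roles.

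First, by Corollary~\ref{cor:ab_av}, a minimizer $(x^0,y^0)$ of $f_1(x^a,y^b)$ over $a\in N$, $b\in M$ can be computed in $O(m^2n^2)$ time and satisfies $f_1(x^0,y^0)\le\Aa_1(Q,c,d)$. Next I would run local search in the optimized swap neighborhood starting from $(x^0,y^0)$, ending at a solution $(\hat x,\hat y)$; since the objective never increases, $f_1(\hat x,\hat y)\le\Aa_1(Q,c,d)$ as well. For the domination count, write $g(x)=\min_y f_1(x,y)$ and $h(y)=\min_x f_1(x,y)$. Local optimality in the optimized swap neighborhood is exactly the statement that $\hat y$ is an optimal response to $\hat x$ (so $f_1(\hat x,\hat y)=g(\hat x)$), that $g(\hat x)\le g(\hat x')$ for every $\hat x'$ obtained from $\hat x$ by one $swapx$, and the symmetric assertion on the $y$-side. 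Hence $(\hat x,\hat y)$ is no worse than every element of the union of: (a) the feasible solutions of value $\ge\Aa_1(Q,c,d)$, of which there are $\ge n^{m-1}m^{n-1}$ by Theorem~\ref{thm:dom}; (b) the $\Theta(m^{n+1}n)$ solutions $(\hat x',y')$ with $\hat x'$ a $swapx$-neighbor of $\hat x$ and $y'\in\yy_1$, each of value $\ge g(\hat x')\ge g(\hat x)=f_1(\hat x,\hat y)$; and (c) the $\Theta(mn^{m+1})$ solutions obtained symmetrically from the $swapy$-neighbors of $\hat y$. Since the domination number is at least the cardinality of any one of (a), (b), (c), it is at least their average, namely $\Omega(m^{n-1}n^{m-1}+m^{n+1}n+mn^{m+1})$. (For BQAP1 one could even skip the local search, since $m^{n+1}n$ and $mn^{m+1}$ are both $O(m^{n-1}n^{m-1})$ for all $m,n\ge2$, so $(x^0,y^0)$ already attains the stated bound; but the local-search step really is needed for BQAP2, where $m^2n^n$ can exceed $m^{m-1}n^{n-1}$ by an unbounded factor.)

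The step I expect to be the main obstacle is bounding the running time by $O(m^3n^3)$. One scan of the optimized swap neighborhood examines $O(mn)$ candidates, each differing from the current solution in a single row; the optimal response to such a candidate can be recomputed incrementally from the current one in $O(mn)$ time (or in $O(m^2n+mn^2)$ from scratch), so a scan costs $O(m^2n^2)$ (respectively $O(m^3n^3)$). What is not immediate is that few scans suffice, since with arbitrary real costs a monotone local search may run for a long time. I would handle this by weakening the stopping rule: it is enough to stop at a solution for which at most half of the nontrivial $swapx$-moves and at most half of the nontrivial $swapy$-moves are improving, because then sets (b) and (c) still retain $\Theta(m^{n+1}n)$ and $\Theta(mn^{m+1})$ of their elements and the domination count above goes through unchanged. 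One then has to argue that this weaker condition is met after $O(mn)$ improving moves starting from $(x^0,y^0)$ — establishing that move bound, or alternatively showing directly that a single well-ordered sweep of the neighborhood already yields a solution satisfying it, is where the real work lies, and the rest of the $O(m^3n^3)$ bookkeeping is routine.
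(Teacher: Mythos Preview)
Your domination-counting argument is essentially the paper's: start from a below-average solution, then dominate the union of (a) the $n^{m-1}m^{n-1}$ above-average solutions from Theorem~\ref{thm:dom} and (b),(c) the $\Theta(m^{n+1}n)$ and $\Theta(mn^{m+1})$ solutions sitting above the optimized-swap neighbours on the $x$- and $y$-sides. The paper even uses the same inclusion--exclusion bookkeeping on $|S_\sim\cup S_x\cup S_y|$.

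The gap is exactly where you flagged it, and the fix is much simpler than what you propose. You do \emph{not} need local optimality at $(\hat x,\hat y)$; a \emph{single} scan of the optimized-swap neighborhood of the starting point $(x^0,y^0)$ already suffices. Let $(\hat x,\hat y)$ be the best solution found in that scan. Then (i) $f_1(\hat x,\hat y)\le f_1(x^0,y^0)\le\Aa_1(Q,c,d)$, so $(\hat x,\hat y)$ dominates all of $S_\sim$; and (ii) by definition of ``best in the neighborhood'', $(\hat x,\hat y)$ dominates every $(x',y')$ with $x'$ a $swapx$-neighbour of $x^0$ and $y'\in\yy_1$ arbitrary, and symmetrically on the $y$-side --- these are precisely the sets $S_x$ and $S_y$, now anchored at $(x^0,y^0)$ rather than at a local optimum. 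One scan examines $O(mn)$ candidates and computes an optimal response for each in $O(m^2n^2)$ time, giving the $O(m^3n^3)$ bound directly, with no need for a weakened stopping rule or a move-count argument. Your version anchors $S_x,S_y$ at the terminal point $(\hat x,\hat y)$ and therefore needs local optimality there, which is what forces the convergence question; anchoring them at $(x^0,y^0)$ removes that dependence entirely.
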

\begin{proof}
We show that a solution with the domination number described in the statement of the theorem can be obtained in the desired running time by searching the optimized swap neighborhood of a solution that is no worse than the average. 

Let $(x^*,y^*)\in \ff_1$ be a solution of BQAP1 such that $f_1(x^*,y^*)\leq \Aa_1(Q,c,d)$.
Such solution can be found in $O(m^2n^2)$ time using Corollary~\ref{cor:ab_av} (or Theorem~\ref{thm:ro}). From the proof of Theorem~\ref{thm:dom} we know that there exists a set $S_\sim$ of $m^{n-1}n^{m-1}$ solutions, with one from every class defined by the equivalence relation $\sim$, such that $f_1(x,y)\geq\Aa_1(Q,c,d)\geq f_1(x^*,y^*)$ for every $(x,y)\in S_\sim$. Let $S_x$ denotes the set of solutions $(x',y)$ where $x'$ can be obtained from $x^*$ by $swapx(i,j)$ for some $i\in M,j\in N$ and $y$ is an arbitrary element of $\yy_1$. Let $S_y$ be defined analogously. Note that $S_x\cup S_y$ is the optimized neighborhood of $(x^*,y^*)$. $S_x\cup S_y$ can be searched in $O(m^3n^3)$ time, and the result of the search has the objective function value less or equal than  every $(x,y)\in S_\sim \cup S_x \cup S_y$. By elementary enumerations it can be calculated that $|S_x|=(m(n-1)+1)m^n$, $|S_y|=(n(m-1)+1)n^m$, $|S_\sim \cap S_x|\leq (m(n-1)+1)m^{n-1}$, $|S_\sim \cap S_y|\leq (n(m-1)+1)n^{m-1}$ and $|S_x \cap S_y|=2mn-m-n+1$. Now we can calculate
\begin{align*}
|S_\sim \cup S_x\cup S_y|\geq\  &|S_\sim|+|S_x|+|S_y|-|S_\sim \cap S_x|-|S_\sim \cap S_y|-|S_x \cap S_y|\\
\geq\ &m^{n-1}n^{m-1}+(m-1)(m(n-1)+1)m^{n-1}+(n-1)(n(m-1)+1)n^{m-1}\\
&-2mn+m+n-1,
\end{align*}
which proves the theorem for BQAP1. The statement for BQAP2 can be proved in the same way.
\end{proof}


\subsection{Rounding procedure}

In this section we describe a simple rounding procedure that can always be used to obtain a solution which is no worse than average. Similar rounding procedure for the unconstrained bipartite boolean quadratic program was investigated from both theoretical and experimental point of view in \cite{PSK}.

Let $(x,y)$ be a feasible solution of the BLP1, the bilinear programming relaxation of BQAP1. By the extreme point optimality property of bilinear programming problems, we can find a feasible solution $(x^*,y^*)$ of BQAP1 with objective function value no worse than that of $(x,y)$. Exploiting the special properties of BQAP1, we present below a simple rounding scheme to obtain such $(x^*,y^*)$ from $(x,y)$. Define

\[
x^*_{ij}=
\begin{cases}
	1 & \text{for one arbitrary } j \text{ that minimizes } c_{ij}+\sum_{k\in M, \ell\in N}q_{ijk\ell}y_{k\ell}, \\
	0 & \text{otherwise},
\end{cases}\]
and then 
\[
y^*_{ij}=
\begin{cases}
	1 & \text{for one arbitrary } i \text{ that minimizes } d_{ij}+\sum_{k\in M, \ell\in N}q_{k\ell ij}x^*_{ij}, \\
	0 & \text{otherwise}.
\end{cases}
\]
Note that $y^*$ is an optimal 0-1 matrix  when $x$ is fixed at $x^*$, and $x^*$ is obtained by rounding $x$. Hence, the above rounding procedure we call \emph{round-$x$ optimize-$y$} (RxOy). Similarly, \emph{round-$y$ optimize-$x$} (RyOx) procedure can also be defined. We omit the details. Furthermore, both RxOy and RyOx schemes can be obtained for BQAP2 with appropriate modifications.

\begin{theorem}\label{thm:round}
	If a feasible solution $(x^*,y^*)$ of BQAP1 \textup{(}BQAP2\textup{)} is obtained by RxOy or RyOx procedure from a feasible solution $(x,y)$ of BLP1 \textup{(}BLP2\textup{)}, then $f_1(x^*,y^*)\leq f_1(x,y)$ \textup{(}$f_2(x^*,y^*)\leq f_2(x,y)$\textup{)}.
\end{theorem}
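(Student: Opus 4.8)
The plan is to prove the claim for BQAP1 and note that BQAP2 follows by the same argument with the obvious changes. Start with a feasible point $(x,y)$ of BLP1, so $0\le x_{ij},y_{ij}\le 1$ with $\sum_j x_{ij}=1$ for each $i$ and $\sum_i y_{ij}=1$ for each $j$. Write the objective as $f_1(x,y)=\sum_{i\in M}\sum_{j\in N}x_{ij}\bigl(c_{ij}+\sum_{k\in M,\ell\in N}q_{ijk\ell}y_{k\ell}\bigr)$, i.e.\ group all terms involving row $i$ of $x$ and call the bracketed quantity $\gamma_i(y)\cdot$(choice of $j$); more precisely set $\alpha_{ij}(y)=c_{ij}+\sum_{k\in M,\ell\in N}q_{ijk\ell}y_{k\ell}$ so that $f_1(x,y)=\sum_{i}\sum_j x_{ij}\alpha_{ij}(y)+\sum_{i,j}d_{ij}y_{ij}$.

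The first key step is the rounding of $x$. For each row $i$, since $\sum_j x_{ij}=1$ and $x_{ij}\ge 0$, the value $\sum_j x_{ij}\alpha_{ij}(y)$ is a convex combination of the numbers $\{\alpha_{ij}(y)\}_{j\in N}$, hence is at least $\min_j \alpha_{ij}(y)$. Therefore if $x^*$ is defined by putting a $1$ in a column $j$ attaining this minimum (exactly the RxOy rule), then $\sum_j x^*_{ij}\alpha_{ij}(y)\le \sum_j x_{ij}\alpha_{ij}(y)$ for every $i$, and summing over $i$ and adding the unchanged $y$-linear term gives $f_1(x^*,y)\le f_1(x,y)$. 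Note $x^*\in\xx_1$.

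The second key step is the optimization of $y$. With $x^*$ now fixed and $0$--$1$, write $f_1(x^*,y)=\sum_{j\in N}\sum_{i\in M}y_{ij}\beta_{ij}(x^*)+\text{const}$, where $\beta_{ij}(x^*)=d_{ij}+\sum_{k\in M,\ell\in N}q_{k\ell ij}x^*_{k\ell}$ collects all terms involving column $j$ of $y$ (the constant part, $\sum_{i,j}c_{ij}x^*_{ij}$, does not depend on $y$). For each column $j$ the constraint $\sum_i y_{ij}=1$, $y_{ij}\ge 0$ again makes $\sum_i y_{ij}\beta_{ij}(x^*)$ a convex combination of $\{\beta_{ij}(x^*)\}_{i\in M}$, so choosing $y^*$ to place a $1$ in a row attaining $\min_i \beta_{ij}(x^*)$ (the RxOy rule for $y$) yields $f_1(x^*,y^*)\le f_1(x^*,y)$. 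Chaining the two inequalities gives $f_1(x^*,y^*)\le f_1(x,y)$, and $(x^*,y^*)\in\ff_1$. The RyOx case is identical after swapping the roles of $x$ and $y$, and BQAP2 requires only replacing the index ranges; the main (and only mildly delicate) point to get right is the bookkeeping of exactly which $q$-terms land in $\alpha_{ij}(y)$ versus $\beta_{ij}(x^*)$ so that no term is double-counted or dropped — a routine but attention-demanding reindexing, which is the one place an error could creep in.
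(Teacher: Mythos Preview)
Your proof is correct and follows essentially the same route as the paper: rewrite $f_1(x,y)$ as $\sum_i\sum_j x_{ij}\alpha_{ij}(y)+\sum_{i,j}d_{ij}y_{ij}$, use the row-stochasticity of $x$ to replace $x$ by the minimizing $0$--$1$ vector $x^*$, then regroup around $y$ and repeat column-wise. The paper's proof is the same chain of two inequalities, just written as one display without naming the convex-combination step explicitly.
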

\begin{proof}
		We prove the statement for BQAP1, and in the case of BQAP2 the proof follows analogously.
	\begin{align*}
		f_1(x,y)&=\sum_{i\in M}\sum_{j\in N}\sum_{k\in M}\sum_{\ell\in N} q_{ijk\ell}x_{ij}y_{k\ell} + \sum_{i\in M}\sum_{j\in N} c_{ij}x_{ij} + \sum_{i\in M}\sum_{j\in N} d_{ij}y_{ij}\\
		&=\sum_{i\in M}\left[ \sum_{j\in N}\left( \sum_{k\in M}\sum_{\ell\in N} q_{ijk\ell}y_{k\ell} + c_{ij}\right) x_{ij}\right] + \sum_{i\in M}\sum_{j\in N} d_{ij}y_{ij} \\
	&\geq \sum_{i\in M}\sum_{j\in N}\left( \sum_{k\in M}\sum_{\ell\in N} q_{ijk\ell}y_{k\ell} + c_{ij}\right) x^*_{ij} + \sum_{i\in M}\sum_{j\in N} d_{ij}y_{ij} \\
	&= \sum_{i\in M}\sum_{j\in N} c_{ij}x^*_{ij} + \sum_{\ell\in N}\left[\sum_{k\in M}\left( \sum_{i\in M}\sum_{j\in N} q_{ijk\ell}x^*_{ij} + d_{k\ell}\right) y_{k\ell}\right]\\
	&\geq \sum_{i\in M}\sum_{j\in N} c_{ij}x^*_{ij} + \sum_{k\in M}\sum_{\ell\in N}\left( \sum_{i\in M}\sum_{j\in N} q_{ijk\ell}x^*_{ij} + d_{k\ell}\right) y^*_{k\ell}\\
	&=f_1(x^*,y^*)
	\end{align*}
\end{proof}

\begin{theorem}\label{thm:ro}
	A feasible solution $(x',y')$ of BQAP1 \textup{(}BQAP2\textup{)}  that satisfies $f_1(x',y')\leq \Aa_1(Q,d,c)$ \textup{(}$f_2(x',y')\leq \Aa_2(Q,d,c)$\textup{)} can be obtained in $O(m^2n^2)$ time.
\end{theorem}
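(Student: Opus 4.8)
The plan is to use the rounding procedure from Theorem~\ref{thm:round} together with a trivial fractional point of the bilinear relaxation whose objective value equals the average. First I would exhibit the obvious fractional feasible solution $(\bar x,\bar y)$ of BLP1 given by $\bar x_{ij}=\frac{1}{n}$ for all $i\in M,j\in N$ and $\bar y_{ij}=\frac{1}{m}$ for all $i\in M,j\in N$; this clearly satisfies the row-sum constraints \eqref{x1} and the column-sum constraints \eqref{y1}, as well as $0\le \bar x_{ij},\bar y_{ij}\le 1$, so it lies in the feasible region of BLP1. Plugging this point into the (multilinear) objective $f_1$ and using the formula of Theorem~\ref{th1}, one checks by a one-line computation that $f_1(\bar x,\bar y)=\Aa_1(Q,c,d)$ — indeed this is exactly the probabilistic interpretation noted right after Theorem~\ref{th1}, with $x_{ij},y_{ij}$ viewed as independent Bernoulli variables of means $\frac1m$ and $\frac1n$.

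Next I would apply the RxOy procedure (or RyOx) of the previous subsection to $(\bar x,\bar y)$ to obtain a genuine $0$-$1$ feasible solution $(x',y')$ of BQAP1. By Theorem~\ref{thm:round} we have $f_1(x',y')\le f_1(\bar x,\bar y)=\Aa_1(Q,c,d)$, which is the desired inequality. (The same argument with BLP2, $\bar x_{ij}=\frac1m$, $\bar y_{ij}=\frac1n$, and the $\Aa_2$ formula handles BQAP2.) For the running time, I would observe that forming $(\bar x,\bar y)$ is immediate, and that the rounding step requires, for each row $i$ of $x$, evaluating the quantity $c_{ij}+\sum_{k\in M,\ell\in N}q_{ijk\ell}\bar y_{k\ell}$ over all $j\in N$ and taking a minimizer, then analogously optimizing $y$ for the fixed $x'$; each of these evaluations costs $O(mn)$ work and there are $O(mn)$ of them, giving $O(m^2n^2)$ overall, matching the claimed bound.

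I do not expect any real obstacle here: the statement is essentially a corollary of Theorem~\ref{thm:round}, and the only thing to verify carefully is that the uniform fractional point indeed attains the average value, which is a routine substitution into the closed-form formula of Theorem~\ref{th1}. The one minor point worth stating explicitly is the bookkeeping of the running time of RxOy, so that the $O(m^2n^2)$ bound is justified rather than merely asserted; I would spell that out in a sentence. (One could alternatively give a self-contained proof that mimics Corollary~\ref{cor:ab_av}, but routing through the relaxation and Theorem~\ref{thm:round} is shorter and makes the connection to the rounding heuristic transparent.)

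\begin{proof}
Consider the fractional point $(\bar x,\bar y)$ defined by $\bar x_{ij}=\tfrac1n$ and $\bar y_{ij}=\tfrac1m$ for all $i\in M$, $j\in N$. It satisfies \eqref{x1}, \eqref{y1} and $0\le\bar x_{ij},\bar y_{ij}\le1$, hence $(\bar x,\bar y)$ is feasible for BLP1. Substituting into the objective and comparing with the formula of Theorem~\ref{th1},
\[
f_1(\bar x,\bar y)=\tfrac{1}{mn}\!\!\sum_{\substack{(i,j,k,\ell)\in\\ M\times N\times M\times N}}\!\! q_{ijk\ell}+\tfrac1n\!\!\sum_{\substack{(i,j)\in\\ M\times N}}\!\! c_{ij}+\tfrac1m\!\!\sum_{\substack{(i,j)\in\\ M\times N}}\!\! d_{ij}=\Aa_1(Q,c,d).
\]
Apply the RxOy procedure to $(\bar x,\bar y)$ to obtain a $0$-$1$ solution $(x',y')\in\ff_1$. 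By Theorem~\ref{thm:round}, $f_1(x',y')\le f_1(\bar x,\bar y)=\Aa_1(Q,c,d)$. For each $i\in M$, computing $\min_{j\in N}\bigl(c_{ij}+\sum_{k\in M,\ell\in N}q_{ijk\ell}\bar y_{k\ell}\bigr)$ together with a minimizer takes $O(mn^2)$ time, so $x'$ is obtained in $O(m^2n^2)$ time; fixing $x'$ and computing $y'$ takes $O(m^2n^2)$ time analogously. Thus $(x',y')$ is found in $O(m^2n^2)$ time. The argument for BQAP2 is identical, using $\bar x_{ij}=\tfrac1m$, $\bar y_{ij}=\tfrac1n$ and the formula for $\Aa_2$.
\end{proof}
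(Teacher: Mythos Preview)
Your proposal is correct and follows essentially the same argument as the paper: take the uniform fractional solution $\bar x_{ij}=\tfrac1n$, $\bar y_{ij}=\tfrac1m$ of BLP1, observe that its objective value equals $\Aa_1(Q,c,d)$, and apply Theorem~\ref{thm:round} via RxOy (or RyOx) to round to an integral solution in $O(m^2n^2)$ time. Your write-up is slightly more explicit about the running-time bookkeeping, but otherwise matches the paper's proof.
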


\begin{proof}
	Let $x_{ij}=1/n$, $y_{ij}=1/m$ $i\in M, j\in N$ be a feasible solution of BLP1. It is easy to verify that $f_1(x,y)=\Aa_1(Q,c,d)$. Let $(x',y')$ be a solution of BQAP1 obtained from $(x,y)$ by RxOy or RyOx. Then the theorem follows from Theorem~\ref{thm:round} and the fact that RxOy and RyOx take $O(m^2n^2)$ time. (A proof for BQAP2 works in the same way.)
\end{proof}

\section*{Acknowledgment}

This research work was supported by an NSERC discovery grant and an NSERC discovery accelerator supplement awarded to Abraham P. Punnen.

\end{document}